\documentclass{llncs}
\usepackage{amssymb}
\usepackage{amsmath}
\usepackage{graphicx}
\usepackage{color,hyperref}
\definecolor{darkblue}{rgb}{0.0,0.0,0.3}
\hypersetup{colorlinks,breaklinks,
            linkcolor=darkblue,urlcolor=darkblue,
            anchorcolor=darkblue,citecolor=darkblue}

\newcommand{\identity}{\epsilon}

\newtheorem{fact}{Fact}

\DeclareMathOperator{\Perm}{Perm}

\title{On Straight Words and Minimal Permutators in Finite Transformation Semigroups}
\titlerunning{On Straight Words and Minimal Permutators}
\author{Attila Egri-Nagy and Chrystopher L. Nehaniv}
\institute{Royal Society Wolfson BioComputation Research Lab\\ Centre for Computer Science \& Informatics Research\\ University of Hertfordshire\\ Hatfield, Hertfordshire AL10 9AB\\ United Kingdom
\\
\email{\{A.Egri-Nagy,C.L.Nehaniv\}@herts.ac.uk}
\thanks{Partial support for this work by the OPAALS EU project FP6-034824 
is gratefully acknowledged.}}
\begin{document}

\maketitle

\begin{abstract}
Motivated by issues arising in computer science,
we investigate the loop-free paths from the identity transformation and corresponding straight words in the Cayley graph of a finite transformation semigroup with a fixed generator set. 
 Of  special interest are words that permute a given subset of the state set. 
Certain such words, called minimal permutators, are shown to comprise a 
code, and the straight ones comprise a finite code. Thus, words that permute a given subset are uniquely factorizable as products of the subset's minimal permutators, and these can be further reduced to straight minimal permutators. This leads to insight into structure of local pools of reversibility in transformation semigroups in terms of the set of words permuting a given subset. These findings can be exploited in practical calculations for hierarchical decompositions of finite automata.
As an example we consider groups arising in biological systems.
\end{abstract}

\section{Introduction}
  From the computational perspective it is very important to know how a particular element of a transformation semigroup can (efficiently) be generated. Of special interest are elements of the semigroup that permute a subset of the state set, as the hierarchical decomposition of the semigroup \cite{primedecomp68} depends on the group components \cite{ciaa2004poster,ActaCybernetica2005}. Here we study the ways in which a particular transformation can be expressed without any redundancy. These generator words head towards the target transformation without any repetitions, so they are called \emph{straight}. Straight words also encode the information describing all possible ways that particular semigroup element can be generated. 


\subsection{Notation} 
For a finite transformation semigroup $(X,S)$ we fix a generator set of transformations $T=\{t_1,\ldots, t_n\}$, so $S= \langle T\rangle$. We also consider the generators as symbols, thus a finite product of the generator elements becomes a word in $T^+$ (the free semigroup on generators $T$ whose associative binary operation is  concatenation). It is then convenient to consider the empty word $\identity$ as the identity map. We need to distinguish between the word (often thought of as a sequence of input symbols) and the transformation it realizes: for the word we just write the generator symbols in sequence ${t_{i_1}\ldots t_{i_n}}\in T^+$ while the transformation is denoted by $\overrightarrow{t_{i_1}\ldots t_{i_k}}\in S$, where the arrow indicates the order in which the generator elements are multiplied and emphasizes that is a mapping.
 
For transformations, we either use the usual 2-line notation for mappings, or if it would become too space consuming we apply the linear notation suggested in \cite{ClassicalTransSemigroups2009}. This is a natural extension of the cyclic notation of permutations. Considering the mappings as digraphs, each transformation consists of one or more components. Each component contains a cycle (possibly a trivial cycle). Unlike the permutation case, the points in the cycle can have incoming edges, denoted by $$[ \text{ source}_1,\ldots,\text{source}_m\ ;\text{ target }]$$ where target is the point in the cycle. If a source point also has incoming edges from other points the same square bracket structure is applied again recursively. We can say that the points in the cycle are sinks of trees. Therefore the brackets indicate the existence of a nontrivial permutation of the sink elements of the trees, but not of their sources:
$$ \big([ \text{ sources}_1;\text{ target}_1\ ],\ldots,[ \text{ sources}_k;\text{ target}_k\ ] \big).$$   
This corresponds to the cycle $(\text{ target}_1,\ldots,\text{ target}_k)$ but at the same time it contains information on transient states. The order is arbitrary if there are more than one component.
(See below for examples.)\footnote{Our notation is slightly different from \cite{ClassicalTransSemigroups2009} as we do not use square brackets for a singleton source.   }

\section{Straight Words}

If the goal is to generate a transformation $s\in S$ as quickly as possible without any digression, then in each step of the generation a new transformation should appear. Also, if a prefix generates the identity map, so strictly speaking we did nothing so far, then the prefix can be discarded. More precisely,

\begin{definition}[Straight Words]\label{def:straightword}
Let $s\in S$ be a transformation generated by  the word $t_{i_1}\ldots t_{i_m}\in T^+$, so $s=\overrightarrow{t_{i_1}\ldots t_{i_m}}$, then this word is \emph{straight} if 
\begin{equation}\label{eq:identityloop}
\overrightarrow{t_{i_1}\ldots t_{i_k}}\neq \identity,\ k\in \{1,\ldots,m-1\}
\end{equation}
and
\begin{equation}\label{eq:repetitionloop}
\overrightarrow{t_{i_1}\ldots t_{i_k}}=\overrightarrow{t_{i_1} \ldots t_{i_l}}\Rightarrow k=l\ \ \  (1\leq k,l\leq m).
\end{equation}
\end{definition}

\begin{example}[Cyclic (monogenic) semigroup] Let $X=\{1,2,3,4\}$ and $t=(\begin{smallmatrix}1&2&3&4\\2&4&1&2\end{smallmatrix})$, or, in the alternative notation, $t=([[3;1];2],4)$. The semigroup generated by $t$ is $$\langle t\rangle=\{t=(\begin{smallmatrix}1&2&3&4\\2&4&1&2\end{smallmatrix}),
t^2=(\begin{smallmatrix}1&2&3&4\\4&2&2&4\end{smallmatrix}),
t^3=(\begin{smallmatrix}1&2&3&4\\2&4&4&2\end{smallmatrix})\}.$$ $t$, $t^2$ and $t^3$ are straight words, but these are the only ones. Higher powers, like $t^4=t^2$ already contain repeated transformations. This example shows that being straight is not necessarily connected to the formal notion of containing repeated subwords.
\end{example}
\begin{example}[Cyclic group] Let $g=(1,2,3)$ be a permutation, then $g^3=\identity$ is a straight word producing the identity map. This example justifies condition \ref{eq:identityloop} in Definition \ref{def:straightword}, as we allow the identity transformation at the end of a word, but not inside.
\label{ex:cyclicgroup}
\end{example}

\subsection{Alternative defintion using trajectories}
\begin{definition}[Trajectory]
Let $s_1,\ldots,s_n$ be a sequence of semigroup elements, $s_j\in S$. Then the sequence is a \emph{trajectory} if for all $s_j, 1\leq j < n$ there is a generator $t_i\in T$ such that $s_j\cdot t_i=s_{j+1}$.
\end{definition}
A trajectory is a path in the Cayley graph of the semigroup starting at the trivial transformation. We can associate a trajectory with a word.

\begin{definition}[Trajectory of a word]
Given a word $t_{i_1}\ldots t_{i_m}$ the corresponding trajectory is calculated by taking the products of prefixes:
$$\identity,\overrightarrow{t_{i_1}},\overrightarrow{t_{i_1}t_{i_2}},\ldots,\overrightarrow{t_{i_1}\ldots t_{i_m}}. $$
\end{definition}

With trajectories we can obviously give an alternative definition of straight words.

\begin{definition}[Straight words]
A word is \emph{straight} if all the elements of its trajectory are distinct, except the case of loops when the first and the last element coincide (and equal $\identity$).
\end{definition}

\subsection{Straight words and  transformations}
From finiteness it follows that the straight words cannot be extended beyond some  finite length, since there are finitely many elements of the semigroup and each prefix should realize a distinct semigroup element. An obvious bound on the length of the straight words is $|S|$. This bound is reached in Example \ref{ex:cyclicgroup}. We also observe that all semigroup elements can be realized by a straight word. 
\begin{lemma}
Let  $(X,S)$ be a transformation semigroup with states $X$ and semigroup $S$ generated by $T$. 
Each semigroup element $s\in S$ 
can be realized by a straight word in the letters of $T$.
\end{lemma}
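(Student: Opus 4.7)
The plan is to take a shortest word realizing $s$ and show it must be straight, by observing that any violation of the straight-word conditions allows us to construct a strictly shorter word realizing the same semigroup element, which contradicts minimality.

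First I would note that since $S = \langle T \rangle$, there exists at least one word $w = t_{i_1} \ldots t_{i_m} \in T^+$ with $\overrightarrow{w} = s$. Among all such words, pick one of minimal length $m$; this is well-defined since lengths are positive integers. Write $w = t_{i_1} \ldots t_{i_m}$ and let $p_k = \overrightarrow{t_{i_1} \ldots t_{i_k}}$ denote the $k$th entry of its trajectory, with $p_0 = \identity$ and $p_m = s$.

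Next I would verify the two conditions of Definition~\ref{def:straightword} by contradiction. Suppose \eqref{eq:identityloop} fails, i.e.\ $p_k = \identity$ for some $1 \le k \le m-1$. Then the suffix $w' = t_{i_{k+1}} \ldots t_{i_m}$ satisfies $\overrightarrow{w'} = \identity \cdot \overrightarrow{w'} = p_k \cdot \overrightarrow{w'} = p_m = s$ by associativity, but $w'$ has length $m-k < m$, contradicting minimality. Suppose instead \eqref{eq:repetitionloop} fails, so $p_k = p_l$ for some $1 \le k < l \le m$. Then excising the middle block yields $w'' = t_{i_1} \ldots t_{i_k} t_{i_{l+1}} \ldots t_{i_m}$, and $\overrightarrow{w''} = p_k \cdot \overrightarrow{t_{i_{l+1}} \ldots t_{i_m}} = p_l \cdot \overrightarrow{t_{i_{l+1}} \ldots t_{i_m}} = p_m = s$, again a strictly shorter word realizing $s$, a contradiction. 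Hence the minimal-length word is straight.

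The only subtlety, which I would not expect to be a real obstacle, is that when $l = m$ the excision removes the entire tail and produces a word of length $k$, which still realizes $s$ because $p_k = p_m = s$; this edge case is handled uniformly by the same argument. Note this reasoning also implicitly reproves the length bound $|S|$ mentioned before the lemma, since a straight word's trajectory consists of distinct elements of $S \cup \{\identity\}$ with at most one repetition (at the identity, only in the loop case).
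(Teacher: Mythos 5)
Your proof is correct. It performs exactly the same two surgical operations as the paper's proof --- discarding the prefix up to an interior occurrence of the identity, and excising the block $t_{i_{k+1}}\ldots t_{i_l}$ between two equal trajectory points --- but packages them as an extremal argument (a minimal-length word realizing $s$ must already be straight) rather than the paper's iterative reduction of an arbitrary word, which needs a separate finiteness-based termination step. Your framing buys a cleaner one-pass argument and directly establishes the paper's subsequent corollary that any minimal-length word for $s$ is straight; the paper's iterative version has the complementary advantage that it reduces a \emph{given} word to a straight one realizing the same transformation, a fact the authors reuse later when reducing minimal permutator words to straight ones. Your handling of the edge cases (the suffix being nonempty when $k\le m-1$, and the excision when $l=m$) is sound.
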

\begin{proof}
Let $s=\overrightarrow{t_{i_1}\ldots t_{i_m}}$. If   $t_{i_1}\ldots t_{i_m}$ is not straight then there is $k\neq l$ such that $\overrightarrow{t_{i_1} \ldots t_{i_k}}=\overrightarrow{t_{i_1}\ldots t_{i_l}}$. Suppose that $k<l$. Then the product $\overrightarrow{t_{i_1}\ldots t_{i_k}t_{i_{l+1}}\ldots t_{i_m}}$ still generates $s$, after we cut out $t_{i_{k+1}}\ldots t_{i_l}$.

Similarly, in case an identity appears at some position (not the final one) in a trajectory then the whole prefix can be ignored up to that point.
 If the reduced word is not straight then we can repeat either processes. Due to finiteness this method will stop, and thus produce a straight word generating $s$.
\qed
\end{proof}

Another way to see that there is at least one straight word for each transformation is to observe that the first occurences (but not the subsequent ones) of transformations in a breadth-first generation of $S$ by $T$  are produced by straight words.

\begin{corollary}
Any minimal length word generating $s\in S$ is a straight word.
\end{corollary}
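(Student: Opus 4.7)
The plan is to prove the contrapositive: if a word $w = t_{i_1}\ldots t_{i_m}$ generating $s$ fails to be straight, then there is a strictly shorter word generating $s$, contradicting minimality. Essentially we reuse the reduction procedure from the proof of the preceding lemma, but observe that a single step of that procedure already drops the length.

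First I would split the non-straightness into the two cases from Definition~\ref{def:straightword}. In the case where some proper prefix $\overrightarrow{t_{i_1}\ldots t_{i_k}} = \identity$ with $k \in \{1,\ldots,m-1\}$, delete that prefix: the suffix $t_{i_{k+1}}\ldots t_{i_m}$ still evaluates to $s$ (since precomposing with the identity does nothing), and it has length $m-k < m$. In the case where $\overrightarrow{t_{i_1}\ldots t_{i_k}} = \overrightarrow{t_{i_1}\ldots t_{i_l}}$ with $k < l$, excise the segment $t_{i_{k+1}}\ldots t_{i_l}$; then $\overrightarrow{t_{i_1}\ldots t_{i_k}t_{i_{l+1}}\ldots t_{i_m}} = s$ and the new word has length $m - (l-k) < m$. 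Either way we have produced a strictly shorter generating word for $s$.

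Therefore a word of minimum length generating $s$ cannot violate either clause of straightness, so it is straight. There is no real obstacle here: the corollary falls out immediately from the single-step reduction used in the lemma, once one notes that each such reduction strictly decreases the word length. One small sanity check worth stating explicitly is that in the identity-prefix case we need $k \geq 1$, which is guaranteed by the definition, so the shortening is genuine.
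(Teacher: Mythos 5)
Your proof is correct and is essentially the argument the paper intends: the corollary is stated without a separate proof precisely because the single-step reductions from the preceding lemma (deleting an identity-realizing prefix or excising a segment between repeated trajectory points) each strictly shorten the word, so a minimal-length word admits no such reduction and must be straight.
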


We have seen that for each semigroup element we can give at least one straight generator word. The following example shows that there can be more straight words for a mapping.
\begin{example}[Constant Maps] Let $t_1=(\begin{smallmatrix}1&2\\1&1\end{smallmatrix})$ and  $t_2=(\begin{smallmatrix}1&2\\2&2\end{smallmatrix})$ be two generators, then $t_1$ and $t_2t_1$ are each straight words for  $\overrightarrow{t_1}$, while $t_2$ and $t_1t_2$ are straight and both realize $\overrightarrow{t_2}$. Constant maps render the transformations before them negligible. 
\end{example}



\subsection{Synonym Straight Words}

Different straight words may represent the same transformation.
For example, if we add a second generator, $r=(\begin{smallmatrix}1&2&3&4\\4&2&2&4\end{smallmatrix})$ to Example 1, then clearly $r$ and $t^2$ are words with this properties. Moreover, two different words may have the same trajectories. 

\subsection{Generalization: Straight Paths}

We can study straight words in a more general settings, we look for straight words $w=t_{i_1}\ldots t_{i_m}$ such that $s\cdot \overrightarrow{w} = r$, where $r,s\in S$. Actually these arise as labels of `straight paths' in the Cayley graph of the semigroup between nodes $s$ and $r$, i.e.\ simple paths that do not cross themselves but go directly from $s$ to $r$. We get the special case of straight words when $s=\identity$.

\subsection{Computational Implementation}

Computational enumeration of  straight words can easily be done with a backtrack algorithm. We implemented the search algorithm in the \texttt{SgpDec} software package \cite{sgpdec} in the \texttt{GAP} computer algebra system \cite{gap4}.   

\section{Minimal Straight Words and Permutations of Subsets}

From now on we focus on straight words that induce permutations on a subset of the state set.
The {\em full permutator semigroup} Perm($Y$) for a subset  $Y \subseteq X$ in $(X,S)$ is
$$Perm(Y) = \{s\in S: Y\cdot s=Y  \}.$$  
Elements of $Perm(Y)$ are called also {\em permutators of} $Y$.
$Perm(Y)$ is closed under products, so by finiteness it restricts to a group of permutations acting on $Y$. The restrictions of elements of $Perm(Y)$ to $Y$  thus comprise a permutation group or `pool of reversibility' or `natural subsystem' within the transformation semigroup $(X,S)$. 
However, while  any $s \in \Perm(Y)$ is also defined on all of $X$ it  is not generally a permutation of $X$. The elements of $Perm(Y)$ may agree on $Y$ but disagree on $X\setminus Y$, so $Perm(Y)$ may not itself be a group nor act faithfully on $Y$. We also call a word a {\it permutator word} if it realizes a permutator transformation.

\begin{example}[Cyclic uniquelly labelled digraph as an automaton]\label{ex:uldgcycle}
The generator set consists of 3 elementary collapsings, $T=\{a=1\mapsto 2,\ b= 2\mapsto 3,\linebreak c = 3\mapsto 1\}$. The generated semigroup has 21 elements and, in the notation introduced above, the straight words of the semigroup elements are:
\begin{center}
\begin{tabular}{cc}
$[3;1]$ & $c$ \\ 
$[2;3]$ & $b$ \\ 
$[1;2]$ & $a$ \\ 
$[[2;3];1]$ & $cb$ \\ 
$[[1;2];3]$ & $ba$ \\ 
$[[3;1];2]$ & $ac$ \\ 
$([1;2],3)$ & $cba$ \\ 
$([3;1],2)$ & $bac$ \\ 
$(1,[2;3])$ & $acb$ \\ 
$(1,[3;2])$ & $cbac$ \\ 
$([2;1],3)$ & $bacb$ \\ 
$([1;3],2)$ & $acba$ \\ 
$[[2;1];3]$ & $cbacb$ \\ 
$[[1;3];2]$ & $bacba$ \\ 
$[[3;2];1]$ & $acbac$ \\ 
$[1;3]$ & $cbacba$ \\ 
$[3;2]$ & $bacbac$ \\ 
$[2;1]$ & $acbacb$ \\ 
\end{tabular}
\end{center} 
\noindent
plus the constant maps that are represented by a lot more straight words:

$[1,3;2]$  $abca$, $aca$, $acbabca$, $acbaca$, $acbacbca$, $acbca$, $babca$, 
  $baca$, $bacbabca$, $bacbaca$, $bacbca$, $bca$, $ca$, $cbabca$, $cbaca$, 
  $cbacbabca$, $cbacbca$, $cbca$
 
$[2,3;1]$  $abc$, $acabc$, $acbabc$, $acbacabc$, $acbacbc$, $acbc$, $babc$, 
  $bacabc$, $bacbabc$, $bacbacabc$, $bacbc$, $bc$, $cabc$, $cbabc$, 
  $cbacabc$, $cbacbabc$, $cbacbc$, $cbc$ 

$[1,2;3]$ \ $ab$, $acab$, $acbab$, $acbacab$, $acbacbcab$, $acbcab$, $bab$, 
  $bacab$, $bacbab$, $bacbacab$, $bacbcab$, $bcab$, $cab$, $cbab$, $cbacab$, 
  $cbacbab$, $cbacbcab$, $cbcab$ 

Now let $Y=\{1,2\}$. Then there are exactly 4 straight words permuting $Y$,  $bac$ and $cbac$ realizing the transposition $(1\ 2)$, and $c$ and  $bacbac$ realizing the identity, thus  $\{\overrightarrow{c},\overrightarrow{bac},\overrightarrow{cbac},\overrightarrow{bacbac}\}\subseteq\Perm(Y)$. Note that two of these words are products of two of the others.  If we extend the search also for words that are not straight we can find more permutators. For example $\overrightarrow{baac}=\overrightarrow{bbac}=([3;1],2)$.
\end{example}

A word $w$ is a  {\em minimal permutator} of $Y$ if $w$ represents an element of Perm($Y$) and
 $w$ is not a product of two or more words permuting $Y$. That is, $w\neq w_1 w_2$ for  
any words $w_1$ and $w_2$ representing elements of Perm($Y$). The set of minimal permutators is not necessarily finite, as we can use idempotents to ``pump in the middle'' like $ba^nc$ in Example \ref{ex:uldgcycle}. 
Therefore we turn our attention to the set of \emph{minimal straight permutators} denoted by $M_S(Y)$. 

\begin{fact} \label{fact}
The set $M_S(Y)$ of minimal straight permutator words for $Y$ is finite.
\end{fact}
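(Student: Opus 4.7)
The plan is to observe that \emph{any} set of straight words over the fixed finite alphabet $T$ is automatically finite, from which the statement follows immediately since $M_S(Y)$ is a subset of the set of all straight words.

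First I would invoke the length bound for straight words already noted in the paper: if $w = t_{i_1}\ldots t_{i_m}$ is straight, then the trajectory $\identity, \overrightarrow{t_{i_1}}, \overrightarrow{t_{i_1}t_{i_2}}, \ldots, \overrightarrow{t_{i_1}\ldots t_{i_m}}$ consists of distinct elements of $S \cup \{\identity\}$, with $\identity$ possibly appearing only at the endpoints. Hence $m \leq |S|$. Since $T$ is a finite generator set, the number of words in $T^+$ of length at most $|S|$ is bounded by $\sum_{k=1}^{|S|} |T|^k$, which is finite.

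Therefore the set of \emph{all} straight words is a finite subset of $T^+$. Since $M_S(Y)$ is by definition contained in this set, $M_S(Y)$ is finite. I would also briefly remark on why the corresponding statement without the straightness assumption fails, as already illustrated in Example~\ref{ex:uldgcycle}: idempotent insertions like $ba^n c$ show that the set of all minimal permutators of $Y$ can be infinite, so the straightness hypothesis is essential in bounding the length of the words considered.

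There is essentially no obstacle here; the only thing to be a little careful about is making sure the length bound from the earlier discussion is actually established (not merely asserted) before using it. Since the definition of straightness forces all nontrivial prefix products to be distinct semigroup elements, this is immediate from the pigeonhole principle applied to $S$, so the argument is a two-line reduction to finiteness of $T$ and of $S$.
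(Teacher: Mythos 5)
Your argument is correct and is exactly the paper's: the proof given there simply cites the fact that straight words are bounded in length (by $|S|$, via the pigeonhole argument on trajectories you spell out), so $M_S(Y)$ sits inside the finite set of all words over $T$ of length at most $|S|$. You have merely made explicit the length bound that the paper's one-line proof takes as already established.
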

\begin{proof} 
The assertion easily follows from the fact that straight words are bounded in length.\qed
\end{proof}
Now  we need to show that we do not lose anything by discarding the words that are not straight, i.e.\ we can still generate the full permutator semigroup. We will use the following obvious fact.
\begin{fact}\label{fact:permute}
If  $w = uv $ permutes $Y$ and $u$ permutes $Y$, then $v$ permutes $Y$.
\end{fact}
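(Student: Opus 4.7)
The plan is to unfold what ``permutes $Y$'' means and then chain actions along $Y$. By definition, a word $x$ permutes $Y$ precisely when $Y\cdot\overrightarrow{x}=Y$. So the two hypotheses translate to $Y\cdot\overrightarrow{u}=Y$ and $Y\cdot\overrightarrow{uv}=Y$, and the goal becomes the single equation $Y\cdot\overrightarrow{v}=Y$.

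First I would observe that the arrow operator respects concatenation, i.e.\ $\overrightarrow{uv}=\overrightarrow{u}\cdot\overrightarrow{v}$, which is just the definition of the transformation realized by a word. Then I would use the fact that the action of $S$ on $X$ lifts pointwise to an action on subsets of $X$ that is associative: $(Y\cdot\alpha)\cdot\beta = Y\cdot(\alpha\beta)$ for all $\alpha,\beta\in S$. With these two ingredients in hand, the one-line calculation
$$ Y\cdot\overrightarrow{v} \;=\; (Y\cdot\overrightarrow{u})\cdot\overrightarrow{v} \;=\; Y\cdot(\overrightarrow{u}\cdot\overrightarrow{v}) \;=\; Y\cdot\overrightarrow{w} \;=\; Y $$
finishes the argument: the first equality substitutes $Y=Y\cdot\overrightarrow{u}$, the second is associativity, and the last uses that $w$ permutes $Y$.

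There is essentially no obstacle here; the statement is truly immediate from associativity of the action, which is why the authors label it a ``fact'' rather than a lemma. The only thing to be careful about is to distinguish between the word $uv\in T^+$ and the transformation $\overrightarrow{uv}\in S$ it realizes, and to explicitly note (or implicitly use) the homomorphism property $\overrightarrow{uv}=\overrightarrow{u}\cdot\overrightarrow{v}$, so that the associativity of composition in $S$ can be brought to bear on the action on $Y$.
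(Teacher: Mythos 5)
Your proof is correct and is exactly the intended argument: the paper states this as an ``obvious fact'' with no written proof, and the one-line computation $Y\cdot\overrightarrow{v}=(Y\cdot\overrightarrow{u})\cdot\overrightarrow{v}=Y\cdot\overrightarrow{uv}=Y$ is precisely the verification the authors leave to the reader. No issues.
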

\begin{theorem} 
In the free semigroup $T^+$ on the generators of $S$ , the minimal 
 permutators $M(Y)$ of $Y$ generate the subsemigroup of all words realizing  elements of $\Perm(Y)$. That is, 
$$\langle M(Y)\rangle = \text{all words representing elements of } \Perm(Y ).$$ 
Moreover, the minimal 
straight permutators $M_S(Y)$ of $Y$ generate a subsemigroup of words realizing all
elements of $Perm(Y)$. 
\end{theorem}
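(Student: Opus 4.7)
The plan is to prove both assertions by induction on word length, reducing the second to a combinatorial claim about how straightness interacts with factorization (via the existence-of-straight-realization lemma from Section~2).

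For the first assertion, one inclusion is immediate: since $\Perm(Y)$ is closed under composition, any product of minimal permutator words realizes an element of $\Perm(Y)$. For the reverse inclusion I would induct on $|w|$. If $w\in M(Y)$ there is nothing to show; otherwise, by the definition of non-minimality there exist non-empty $w_1,w_2$ with $w=w_1w_2$ and $\overrightarrow{w_i}\in\Perm(Y)$ for $i=1,2$. Fact~\ref{fact:permute} is what makes such splits natural to locate in practice: it suffices to find any permutator prefix, for the corresponding suffix is then automatically a permutator. Since $|w_1|,|w_2|<|w|$, the inductive hypothesis factors each into minimal permutators, and concatenation yields the factorization of $w$.

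For the moreover, given $s\in\Perm(Y)$ the lemma of Section~2 produces a straight word $w$ realizing $s$, so it suffices to factor every straight permutator word into elements of $M_S(Y)$. Induct on $|w|$: if $w\in M_S(Y)$ we are done, and otherwise $w=w_1w_2$ with $w_i$ non-empty and $\overrightarrow{w_i}\in\Perm(Y)$. The key step, which I expect to be the main obstacle, is the following sublemma: \emph{if $w=w_1w_2$ is straight, then both $w_1$ and $w_2$ are straight.} Straightness of $w_1$ is immediate because its trajectory is a prefix of the trajectory of $w$. For $w_2$ I would argue by contrapositive: an identity $\overrightarrow{u}=\identity$ at an interior prefix $u$ of $w_2$ left-multiplies to $\overrightarrow{w_1u}=\overrightarrow{w_1}$, yielding two distinct interior positions of the trajectory of $w$ with the same value; similarly, any collision $\overrightarrow{u_a}=\overrightarrow{u_b}$ between distinct prefixes of $w_2$ left-multiplies to a collision in the trajectory of $w$. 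Either case contradicts straightness of $w$; the only care needed is to check that the translated positions lie in the required index range $\{1,\ldots,|w|\}$, which is automatic from $|w_1|\geq 1$ and $|w_2|\geq 1$. Granted the sublemma, both $w_1$ and $w_2$ are strictly shorter straight permutator words, and the induction closes.
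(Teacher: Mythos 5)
Your proof is correct. The first half is essentially the paper's own argument: the paper also inducts on word length and uses Fact~\ref{fact:permute} to split off a permutator prefix; the only cosmetic difference is that the paper takes the \emph{least} permutator prefix (so that factor is automatically minimal and only the tail needs the induction hypothesis), while you split into two permutator factors and recurse on both. The ``moreover'' part, however, is where you genuinely diverge. The paper factors an arbitrary word for $s\in\Perm(Y)$ into minimal permutators and then straightens each factor separately by deleting loops from its trajectory in the Cayley graph (an argument it only spells out in a later Fact about the reduction $red(w)$). You instead invoke the Lemma of Section~2 to start from a straight word realizing $s$ and keep the entire induction inside the set of straight words, at the price of your sublemma that in any factorization $w=w_1w_2$ of a straight word both factors are straight; your verification of that sublemma is sound --- straightness of $w_1$ because its prefix set is contained in that of $w$, and straightness of $w_2$ because any identity at an interior prefix or collision of prefixes left-translates by $\overrightarrow{w_1}$ to a collision among prefixes of $w$ at distinct admissible indices. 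Each route buys something: the paper's loop-removal produces a straightening of \emph{any} given factorization and feeds directly into the later retraction theorem $\phi\colon M^+(Y)\twoheadrightarrow M_S(Y)^+$; yours is self-contained at this point in the paper, and it proves slightly more than the stated ``moreover,'' namely that \emph{every} straight permutator word factors over $M_S(Y)$, and that for straight words minimality with respect to permutator factorizations coincides with minimality with respect to straight permutator factorizations.
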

\begin{proof} Let $p = t_1 \ldots t_k$ represent an element of Perm($Y$). We
show $p$ is a product of minimal permutators by induction on $k$.
Either $p$ is a minimal permutator or there is a least $j$ strictly
less than $k$ so that $t_1 \ldots t_j$ permutes $Y$. Now $t_1 \ldots
t_j$ is a minimal permutator of $Y$ and $p = (t_1 \ldots t_j) (t_{j+1}
\ldots t_k)$ with each of the expressions in parentheses permuting
$Y$.  The length of the second word is strictly less than $k$, so by
induction hypothesis, it too can be written as a product of minimal
permutators of $Y$. This proves that an arbitrary word $p$
representing an element of Perm($Y$) can be factored as a product of
minimal permutators of $Y$. Each minimal permutator factor can
be shortened by removing letters if necessary to a straight word (or the empty word).
The result follows.\qed\end{proof}

\begin{theorem} 
Any word $w$ representing a permutator of $Y$ can be factored uniquely into a product of minimal permutators of $Y$.
\end{theorem}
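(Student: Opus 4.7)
The plan is to prove uniqueness by exploiting minimality to show that the minimal permutators form a prefix code in the free semigroup $T^+$: no minimal permutator is a proper prefix of another. Existence of a factorization is already supplied by the previous theorem, so I only need to rule out two distinct factorizations of the same word.

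Concretely, I would take any two factorizations $w = u_1 u_2 \cdots u_r = v_1 v_2 \cdots v_s$ of the word $w$ into minimal permutators of $Y$, and first argue that $u_1 = v_1$ as words in $T^+$. Since $u_1$ and $v_1$ are both prefixes of the same word $w$, one must be a prefix of the other; without loss of generality write $v_1 = u_1 x$ with $x \in T^*$. Both $u_1$ and $v_1$ represent elements of $\Perm(Y)$, so Fact \ref{fact:permute} applied to $v_1 = u_1 \cdot x$ forces $x$, if non-empty, also to represent an element of $\Perm(Y)$. In that case $v_1$ would decompose as a product of two permutator words of $Y$, contradicting the minimality of $v_1$. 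Hence $x$ is empty and $u_1 = v_1$.

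Having matched the leading factors, I would cancel $u_1 = v_1$ on the left in the free semigroup $T^+$ to get $u_2 \cdots u_r = v_2 \cdots v_s$, a strictly shorter word which, by Fact \ref{fact:permute} applied to $w = u_1 (u_2 \cdots u_r)$, again represents a permutator of $Y$. The result then follows by induction on the length of $w$, with the base case $w \in M(Y)$ forcing $r = s = 1$ trivially.

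The only subtlety I anticipate, and the step most likely to mislead a reader, is maintaining the distinction between words and the transformations they realize: the cancellation $u_1 = v_1 \Rightarrow u_2 \cdots u_r = v_2 \cdots v_s$ is legitimate because we are working inside the free semigroup $T^+$, where cancellation is unrestricted, and not inside $S$, where it generally fails. Once this is made explicit, the entire argument reduces to the prefix-matching step, which is the conceptual heart of the proof and follows directly from Fact \ref{fact:permute} together with the minimality hypothesis.
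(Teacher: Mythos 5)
Your proof is correct and follows essentially the same route as the paper's: both reduce uniqueness to the observation that if one minimal permutator were a proper prefix of another, Fact~\ref{fact:permute} would exhibit the longer one as a product of two permutator words, contradicting minimality. The only cosmetic difference is that you match the first factors and induct on length, while the paper picks the least index at which the two factorizations differ and derives the same contradiction there.
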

\begin{proof} By the previous theorem, we can write 
$$ w= w_1 \cdots w_k, $$ where each $w_i$ is a minimal permutator word of $Y$.
Suppose $w$ can also be written as
$$ w = w_1' \ldots w'_{\ell},$$
where again each $w_i'$ represents  is a minimal permutator word for $Y$.
We show $\ell = k$ and $w_j=w_j'$ for all $j$ ($1 \leq j \leq \ell$).
If this were not the case, then let  $i$ be the least  index such that $w_i \neq w_i'$.
Without loss of generality, assume $|w_i| \leq |w_i'|$.  It follows then that
$w_i' = w_i v$ for some nonempty word $v$. By Fact~\ref{fact:permute}, $v$ 
represents a permutator of $Y$. But we have then written $w_i'$ as a product of permutator words, 
this contradicts the choice of $w_i'$ as a  minimal permutator. It follows $w_i=w_i'$ for all $i$, and, since
the two factorizations are of the same word, that $\ell=k$.   \qed\end{proof}

In other words,

\begin{corollary}
The minimal permutator words are a code.
\end{corollary}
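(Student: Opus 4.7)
The plan is to invoke directly the preceding theorem together with the definition of a code. Recall that a subset $C$ of the free semigroup $T^+$ is a \emph{code} exactly when every word in the subsemigroup $\langle C\rangle$ admits a unique factorization as a product of elements of $C$. Setting $C=M(Y)$, this is the property I need to verify, so there are essentially two small checks to make.

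First I would identify $\langle M(Y)\rangle$ with the set of all words representing elements of $\Perm(Y)$. The inclusion $\langle M(Y)\rangle \subseteq \{\text{permutator words}\}$ holds because each letter of $M(Y)$ represents an element of $\Perm(Y)$ and $\Perm(Y)$ is closed under products (directly, $Y\cdot s=Y$ and $Y\cdot s'=Y$ force $Y\cdot ss'=Y$). The reverse inclusion is exactly the first part of the preceding generation theorem. Second, I would observe that the uniqueness of the factorization of any such word over $M(Y)$ is precisely the statement of the theorem immediately before the corollary. Combining these two observations verifies the defining property of a code for $M(Y)$.

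The main (non-)obstacle is purely notational: once the unique-factorization theorem is established, no combinatorics remain — the corollary is a restatement in coding-theoretic vocabulary. In particular, there is no need to revisit the induction on word length or the minimality-contradiction step; the corollary is a one-line consequence of the theorem it follows. If the reader wishes, the same argument applied to $M_S(Y)$ (together with Fact~\ref{fact} giving finiteness) shows additionally that the straight minimal permutators form a finite code.
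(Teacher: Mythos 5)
Your proposal is correct and matches the paper exactly: the paper introduces this corollary with ``In other words,'' i.e.\ it is just a restatement of the unique-factorization theorem (together with the generation theorem) in coding-theoretic language, which is precisely your argument. No further comment is needed.
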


\begin{corollary}
The minimal permutator straight words are a finite code.
\end{corollary}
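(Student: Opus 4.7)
The plan is to deduce the corollary directly from the preceding corollary, which establishes that $M(Y)$ is a code, together with Fact~\ref{fact}, which asserts the finiteness of $M_S(Y)$. Finiteness is therefore already in hand, and the only thing left is to transfer the code property from $M(Y)$ to its subset $M_S(Y)$.

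First I would record the inclusion $M_S(Y) \subseteq M(Y)$. A minimal straight permutator is, by definition, a straight word realizing an element of $\Perm(Y)$ that cannot be written as a product of two permutator words; but the latter unfactorability condition is exactly the definition of a minimal permutator. Hence any element of $M_S(Y)$ is in particular an element of $M(Y)$, and any factorization of a word over $M_S(Y)$ is automatically a factorization of that same word over $M(Y)$.

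Next I would observe that any subset of a code is a code: if $w = u_1 \cdots u_k = v_1 \cdots v_\ell$ with all $u_i, v_j \in M_S(Y) \subseteq M(Y)$, then the preceding corollary gives $k=\ell$ and $u_i = v_i$ for every $i$. Thus $M_S(Y)$ itself has unique factorization, and together with Fact~\ref{fact} it is a finite code.

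The argument is essentially formal and I do not expect a genuine obstacle; the only step that requires a moment's thought is confirming the inclusion $M_S(Y) \subseteq M(Y)$, since if one were to read ``minimal straight permutator'' as ``minimal among straight permutators'' the inclusion could fail. Once one fixes the reading that unfactorability is with respect to all permutator words (which is the reading used implicitly in the earlier theorem and its proof), the corollary is immediate.
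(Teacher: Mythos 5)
Your proposal is correct and matches the paper's (implicit) reasoning: the corollary is meant to follow immediately from the unique-factorization theorem (equivalently, the preceding corollary that $M(Y)$ is a code), the inclusion $M_S(Y)\subseteq M(Y)$, the fact that a subset of a code is a code, and Fact~\ref{fact} for finiteness. Your side remark about the two possible readings of ``minimal straight permutator'' is a worthwhile clarification, and you resolve it the same way the paper does.
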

The last corollary shows the usefulness of straight words, when looking for permutators instead of an infinite search space we can restrict the search to a finite set of words. 

\begin{fact}
For minimal permutator word $w$, there is a (in general non-unique) straight minimal permutator word $red(w)$ obtained from $w$ by removing some letters such that $
\overrightarrow{w}= \overrightarrow{red(w)}$.
\end{fact}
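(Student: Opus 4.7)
The plan is to take $red(w)$ to be any output of the straightening reduction used in the earlier lemma: starting from $w = t_{j_1}\cdots t_{j_m}$, repeatedly either delete an identity-valued prefix $t_{j_1}\cdots t_{j_k}$ with $\overrightarrow{t_{j_1}\cdots t_{j_k}} = \identity$, or delete an interior block $t_{j_{k+1}}\cdots t_{j_l}$ witnessed by an equality $\overrightarrow{t_{j_1}\cdots t_{j_k}} = \overrightarrow{t_{j_1}\cdots t_{j_l}}$. This preserves the realized transformation and terminates at a straight word $red(w)$ which is a subsequence of the letters of $w$ and satisfies $\overrightarrow{red(w)} = \overrightarrow{w} \in \Perm(Y)$. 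Hence $red(w)$ is certainly a straight permutator word; only its minimality needs justification.

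To verify minimality, I would first prove by induction on the number of reduction steps the following prefix-correspondence invariant: at every intermediate word, and for every prefix of that word, the realized transformation equals $\overrightarrow{t_{j_1}\cdots t_{j_p}}$, where $p$ is the position in $w$ of the last letter of the prefix. Both reduction steps preserve this invariant essentially for the same reason they preserve the overall transformation: the deleted block acts as the identity on whatever comes before it, so extending to positions past the cut introduces no change in realized value. Setting up this indexing cleanly is the main obstacle; once it is in place the algebra is routine.

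To conclude, suppose for contradiction that $red(w) = u_1 u_2$ with nonempty $u_1, u_2 \in T^+$ each representing an element of $\Perm(Y)$. Let $p$ be the position in $w$ of the last letter of $u_1$, and split $w = w_1 w_2$ with $|w_1| = p$. The invariant yields $\overrightarrow{w_1} = \overrightarrow{u_1} \in \Perm(Y)$, and Fact~\ref{fact:permute} applied to $w = w_1 w_2$ gives $\overrightarrow{w_2} \in \Perm(Y)$. Both $w_1$ and $w_2$ are nonempty, since $u_1, u_2$ are nonempty and the positions in $w$ of the retained letters of $red(w)$ strictly increase. This factors $w$ as a product of two permutator words of $Y$, contradicting the minimality of $w$. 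Hence $red(w)$ is a minimal straight permutator realizing the same transformation as $w$, and the non-uniqueness in the statement simply reflects the freedom to choose, at each reduction step, which identity prefix or which repeated-value pair to delete.
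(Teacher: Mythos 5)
Your proposal is correct, and its core is the same as the paper's: reduce $w$ to a straight word by excising the subwords that correspond to loops in the trajectory (equivalently, deleting an identity-valued prefix or an interior block between two prefixes realizing the same element), observing that each excision preserves the realized transformation. Where you go beyond the paper is in the second half: the paper's proof only argues that the reduction yields a \emph{straight} word $w'$ with $\overrightarrow{w'}=\overrightarrow{w}$, and is silent on why $w'$ is still a \emph{minimal} permutator, which is part of the claim. Your prefix-correspondence invariant supplies exactly the missing step, and it is sound: each deletion replaces $\overrightarrow{t_{j_1}\cdots t_{j_l}}$ by the equal element $\overrightarrow{t_{j_1}\cdots t_{j_k}}$ before continuing, so every prefix of the reduced word realizes the same transformation as the prefix of $w$ ending at the same original position; hence a factorization $red(w)=u_1u_2$ into nonempty permutator words would transfer, via Fact~\ref{fact:permute}, to a factorization of $w$ itself, contradicting its minimality. (In the paper's Cayley-graph language this is the observation that the trajectory of $red(w)$ is a subsequence of the trajectory of $w$, so any permutator node hit by a proper prefix of $red(w)$ is already hit by a proper prefix of $w$.) So your argument is not merely equivalent to the paper's --- it completes it.
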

\begin{proof} Considering the Cayley graph of the transformation semigroup $(X,S)$ with generators $T$. This has vertices $S^1 = S \cup \{\epsilon\}$, where $\epsilon$ denotes the identity mapping on $X$, and edges $s \stackrel{{t}}{\longrightarrow} s'$, where $s'=s\,\overrightarrow{t}$ with $t \in T$, $s,s' \in S^1$.   
Now, by the alternative definition of straight words,  it is clear the a word is straight if and only if the path it labels starting at $\epsilon$ and has no loop (does not visit any node more than once). Noting that adding or removing loops to the path corresponding to a product does not change 
its endpoint, we conclude that removing contiguous subwords from the word $w$ corresponding to loops, iteratively if necessary,  results in a path with no loops, corresponding to a  straight word $w'$ representing the same transformation as $w$.
\qed
\end{proof}

\begin{theorem}
There is a well-defined homomorphism $\phi: M^+(Y) \twoheadrightarrow M_S(Y)^+$ from  the semigroup of permutator words onto the semigroup generated by minimal straight permutator words. Furthermore, $\phi$ is a retraction, i.e.\ $\phi(w)=w$ for all words $w$ in $M_S(Y)$ (and hence is the identity on $M_S(Y)^+$). 
The permutator words
$w$ and $\phi(w)$ act by the same permutation of $Y$, and moreover by the same mapping on $X$, and $\phi(w)$ is a straight word obtained from $w$ having the same trajectory as $w$ except for the removal of loops.
\end{theorem}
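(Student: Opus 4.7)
The plan is to define $\phi$ on each minimal permutator factor and extend by the homomorphism property, leveraging the unique factorization theorem just established. I fix a choice of the reduction function $\mathrm{red}\colon M(Y) \to M_S(Y)$ guaranteed by the preceding Fact, with the convention that $\mathrm{red}(v) = v$ whenever $v \in M_S(Y)$ is already straight (consistent, since a straight word has no loops in its trajectory to excise). For any $w \in M^+(Y)$ with unique factorization $w = w_1 \cdots w_k$, each $w_i \in M(Y)$, I set $\phi(w) := \mathrm{red}(w_1) \cdots \mathrm{red}(w_k) \in M_S(Y)^+$. Well-definedness is immediate from the uniqueness of the factorization.

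For the homomorphism property, if $w = w_1 \cdots w_k$ and $w' = w_1' \cdots w_\ell'$ are unique factorizations, then $w_1 \cdots w_k w_1' \cdots w_\ell'$ is a factorization of $ww'$ into minimal permutators, so by uniqueness it \emph{is} the unique factorization of $ww'$; hence $\phi(ww') = \mathrm{red}(w_1) \cdots \mathrm{red}(w_\ell') = \phi(w)\phi(w')$. The retraction property follows by observing that any $v \in M_S(Y)^+$ has its unique factorization in $M_S(Y) \subseteq M(Y)$, on which $\mathrm{red}$ acts as the identity; hence $\phi(v) = v$ and $\phi$ is surjective onto $M_S(Y)^+$. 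The action claims are immediate from $\overrightarrow{\mathrm{red}(w_i)} = \overrightarrow{w_i}$ (by the preceding Fact), giving $\overrightarrow{\phi(w)} = \overrightarrow{w}$, so $w$ and $\phi(w)$ realize the same mapping on $X$ and hence the same permutation on $Y$. The trajectory of $w$ decomposes into segments joined at the boundary permutator states $\overrightarrow{w_1 \cdots w_i}$, and replacing each $w_i$ by $\mathrm{red}(w_i)$ excises precisely the loops within that segment, so the trajectory of $\phi(w)$ is obtained from that of $w$ by removing loops within each factor segment.

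The main obstacle I anticipate is justifying that $\phi(w)$ is straight in the strict sense of Definition~\ref{def:straightword}, since a loop could in principle straddle two adjacent factor segments and the factor-wise reduction leaves such cross-boundary loops intact; excising them globally would break the homomorphism property. I would address this by noting that intermediate states within any segment are non-permutators (a proper permutator prefix of $w_i$ would, via Fact~\ref{fact:permute}, contradict minimality of $w_i$), whereas boundary states are permutators, so any cross-segment repetition must be between two boundary states or between two intermediate states in different segments. The cleanest reading of the theorem is then that $\phi(w) \in M_S(Y)^+$ is a product of straight minimal permutator words whose overall trajectory arises from that of $w$ by loop removal within each factor, and this is precisely what the subsequent applications require.
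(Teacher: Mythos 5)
Your proposal is correct and follows essentially the same route as the paper: fix a reduction $red$ on the code $M(Y)$ (acting as the identity on $M_S(Y)$) and extend to a homomorphism by the freeness of the minimal permutator code, i.e.\ by the unique factorization established in the preceding theorem. Your closing caveat --- that $\phi(w)$ need not be straight as a whole because a loop in the trajectory can straddle two factor segments --- flags a genuine imprecision in the theorem's final sentence that the paper's one-line proof does not address; otherwise the paper's argument is identical in substance but omits the explicit verifications of well-definedness, the homomorphism property, the retraction property, and the action/trajectory claims that you supply.
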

\begin{proof}
To get a well-defined homomorphism from minimal permutator code to the straightword minimal permutator code, one only needs to choose some reduction for each minimal permutator (any reduction at all would work).
 The reason why one gets a homomorphism is due to that fact the minimal permutators are a code, hence free generators of a free semigroup, so we need only say where each generator goes and extend uniquely by freeness.
\qed
\end{proof}

The reduction of a minimal permutator to a straight word need not be unique. This comes from the fact that synonym straight words do exist. 
Thus the homomorphism of the theorem need not be unique.

One natural way to choose the reduction $red(w)$ is the following: 
given a minimal permutator word $w$ that is not straight, find
the first node (along its trajectory) that is later repeated. Start deleting letters after the letter that first takes us into this node. Find the last time this node occurs. Delete all letters from there up to and including the one taking us into the node for the last time. This process removes at least one letter since the word was not straight. Repeat the procedure until the resulting word is straight.  This necessarily terminates with a reduced form for $w$, realizing the same transformation by a straight word obtained from $w$ by some excising some subwords (`removing loops' in the trajectory as described). 


\section{A Biological Example}

It seems that in constructing interesting examples the human mind is somewhat contrained and reverts back to special cases. Therefore studying ``naturally occuring'' transformation semigroups can be useful, so here we investigate a biological example.
We should also mention that in exchange  semigroup and automata theory can also provide useful tools for  other sciences \cite{rhodes_book}.

\subsection{The p53-mdm2 regulatory pathway}

\begin{figure}[h]
\centering
\includegraphics[width=\textwidth]{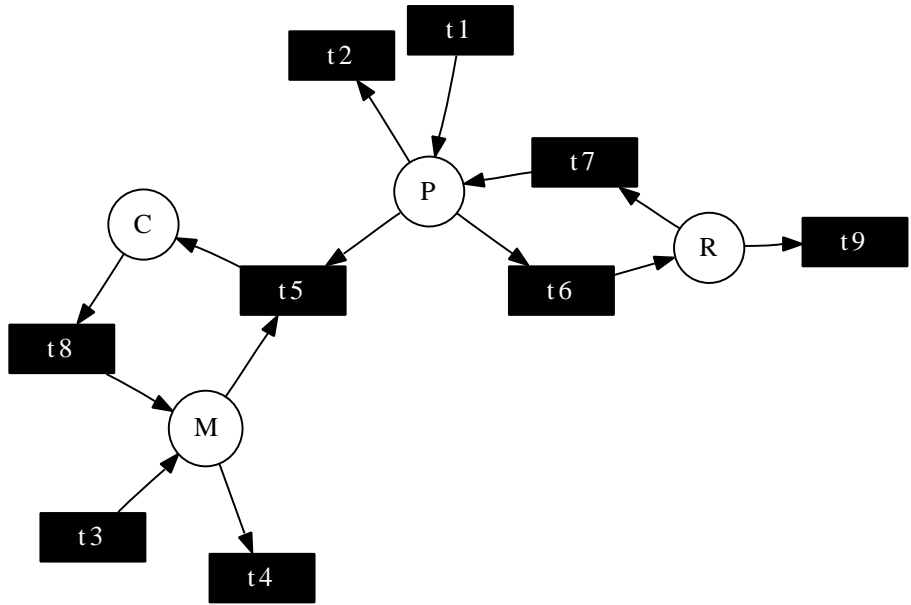}
\caption{\bf \small Petri net for the p53-mdm2 regulatory pathway. P = p53, M = mdm2, C = p53-mdm2, R = p53*.}
\label{fig:parrondo-new-Petrinet}
\end{figure}

\begin{figure}[h]
\centering
\includegraphics[width=\textwidth]{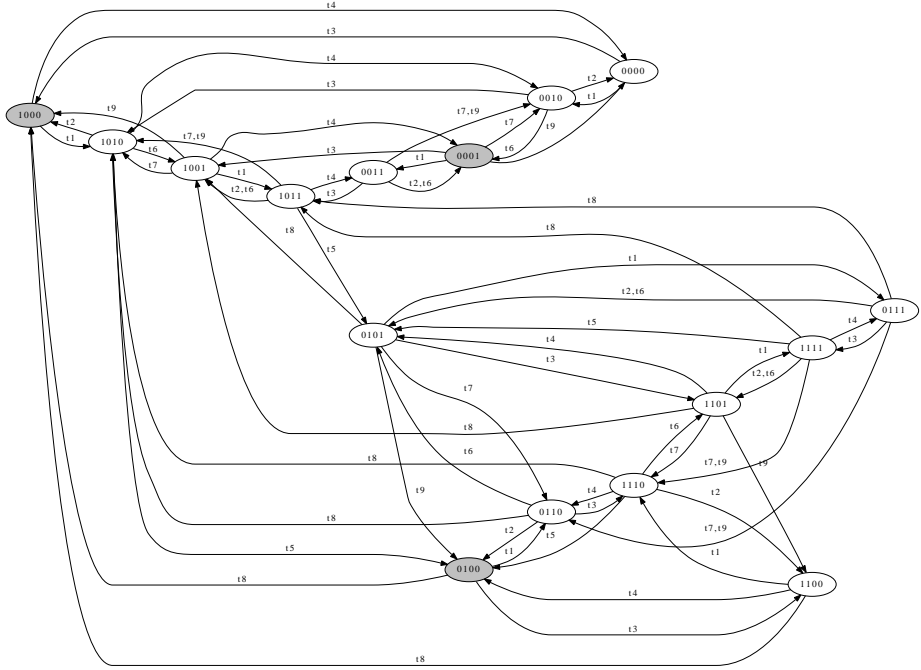}
\caption{\bf \small Automaton derived from 2-level Petri net of the p53 system (16 states). The labels on the nodes encode the possible configurations for M, C, P and R (in this order). 0 denotes the absence (or presence below a threshold), 1 the presence (above the threshold) of the given type of molecule. For instance, 0101 means that C and R are present. The shaded states correspond to the state set $\{3=1000,5=0001,8=0100\}$.}
\label{fig:parrondobinarayfa}
\end{figure}

 Biological networks are frequently modelled by Petri nets and thus it is not difficult to convert such a model to a transformation semigroup \cite{PetriNet2008BioSys}.
Figure~1 shows such a model of the p53-mdm2 regulatory pathway, which is important in the cellular response to ionizing radiation and can trigger self-repair or, in extreme cases, the onset of programmed cell-death (apoptosis). This pathway is involved in ameliorating DNA damage and preventing cancer \cite{p53aref}.
Figure~2 shows the corresponding finite automata with 16 states in which two levels of each of the 4 molecular species involved are distinguished.  Corresponding to the transitions we have the following generator transformations:
\begin{align*}
t_1&=[1;2][3;4][5;6][7;9][8;10][11;12][13;14][15;16]\\
t_2&=[2;1][4;3][6;5][9;7][10;8][12;11][14;13][16;15]\\
t_3&=[1;3][2;4][5;7][6;9][8;11][10;12][13;15][14;16]\\
t_4&=[3;1][4;2][7;5][9;6][11;8][12;10][15;13][16;14]\\
t_5&=[4,12;8][9,16;13]\\
t_6&=[2,6;5][4,9;7][10,14;13][12,16;15]\\
t_7&=[5,6;2][7,9;4][13,14;10][15,16;12]\\
t_8&=[8,11;3][10,12;4][13,15;7][14,16;9]\\
t_9&=[5;1][6;2][7;3][9;4][13;8][14;10][15;11][16;12]
\end{align*}

\subsection{Analysis of a permutator subsemigroup}

None of the above generators contain a cycle, so the existence of a nontrivial permutation group cannot be simply read off. The generated semigroup has 316665 elements. The decomposition of the semigroup shows has (several copies of) the following group components: cyclic group $C_2$ acting on 4, symmetric group $S_3$ acting on 3 and $C_2$ acting on 2 states.

We pick the set $\{3,5,8\}$ (there are many 3-element subsets that are mutually reachable from each other under the action of the semigroup, therefore they have isomorphic permutator groups). Computer calculation shows $|Perm(\{3,5,8\})|=542$. Consider the following  words of length 13 and 15, found by a breadth-first search, 
\begin{align*}
a =& t_1 t_5 t_3 t_8 t_5 t_1 t_4 t_8 t_5 t_7 t_8 t_5 t_6 \\
b=& t_1 t_4 t_8  t_5 t_3 t_8 t_5 t_1 t_4 t_8 t_5 t_7 t_8 t_5 t_6 
\end{align*}
realizing transformations
\begin{align*}
\overrightarrow{a}=&([1,2,10;\mathbf{3}],[4,7,9,11,12,15,16;\mathbf{5}],[6,13,14;\mathbf{8}])\\
\overrightarrow{b}=&[1,2,4;3]([10,11,12,13,14,15,16;\mathbf{5}],[6,7,9;\mathbf{8}]).
\end{align*}
As highlighted, these are clearly permutator words for the set $\{3,5,8\}$ and generate $S_3$.
It is straightforward to verify that these two words are straight.
Moreover, $a$ and $b$ can be checked to be minimal permutators (i.e.\ they cannot be properly factored into permutators of $\{3,5,8\}$).
However, the idempotent powers of these words $\overrightarrow{bb}$ and $\overrightarrow{aaa}$ are not equal, so the transformations do not lie in the same subgroup of the semigroup of the automaton.

We derive from these words, 
$x = b b a b b$  (a word with 73 letters), 
which reduces to straight word: $b b a$, (with only 43 letters), giving the transformation
$$\overrightarrow{x}=([1,2,10;5],[6,13,14;8])[4,7,9,11,12,15,16;3]$$
and 
$y = aaabaaa$, (another long word with 93 letters), which reduces to straight word  
$aaab$ (with 54 letters), giving the transformation
$$\overrightarrow{y}=([1,2,10;5],[6,13,14;8])[4,7,9,11,12,15,16;3].$$

These words ($a$,$b$, $aaab$, $bba$) are all straight permutator words, but obviously $aaab$ and $bba$ are not minimal permutators since
they are products of (straight) minimal permutators $a$ and $b$.

We have two copies of group  the symmetric group $S_3$ each faithfully acting on $\{3,5,8\}=\{M,R,C\}$:
one $S_3$ is generated by $a$ and $y$, and  another isomorphic copy of $S_3$ by $b$ and  $x$
with idempotents (the identity elements of these two groups): 
$\overrightarrow{a^3} = \overrightarrow{y^2} = [1,2,10;8][4,7,9,11,12,15,16;3][6,13,14;5] $
and 
$\overrightarrow{b^2} = \overrightarrow{x^3}  = [ 1,2,4;3][6,7,9;5][10,11,12,13,14,15,16;8]$, respectively.

Together the elements $\overrightarrow{a}$ and $\overrightarrow{b}$ generate a 12 element semigroup which is just the union of these two groups. This is to some extent counterintuitive, as one would expect one copy of the permutator group for one particular subset of the state set; furthermore as mentioned above the permutator semigroup $Perm(Y)$ has, not just these 12, but 542 elements, and this is but one of many instances of sets of states in this biological model acted on by the symmetric group  $S_3$.

\section{Conclusion}

Based on algorithmic efficiency considerations we studied straight words that encode loop-free paths in the Cayley graph of a transformation semigroup. We focused on straight words generating transformations that permute a given subset of the state set. 
We found that these minimal permutator straight words form a finite code, and also the minimal permutator words
form a code, although, as easy examples show, the latter is generally an infinite code.
The minimal permutator straight words  generate the corresponding subgroup of the transformation semigroup. These can be exploited in the calculations of hierarchical decompositions.  
These findings show that there are lot more ways within a semigroup to generate a subgroup than one might think, but for finding the subgroup it is enough to consider a subset of them.

\bibliographystyle{splncs03}
\bibliography{straightwords}

\end{document}